\newcommand{\ud}{\,\mathrm{d}}
\newcommand{\beqa}{\begin{eqnarray}}
\newcommand{\eeqa}[1]{\label{#1}\end{eqnarray}}
\newcommand{\beq}{\begin{equation}}
\newcommand{\eeq}[1]{\label{#1}\end{equation}}
\newcommand{\Tr}{\mathop{\rm Tr}\nolimits}
\begin{document}
\title{Explicit examples of extremal quasiconvex quadratic forms that are not polyconvex}
\author{Davit Harutyunyan and Graeme Walter Milton\\
\textit{Department of Mathematics, The University of Utah}}

\maketitle
\begin{abstract}
We prove that if the associated fourth order tensor of a quadratic form has a linear elastic cubic symmetry then it is quasiconvex if and only if it is polyconvex, i.e. a sum of convex and null-Lagrangian quadratic forms. 
We prove that allowing for slightly less symmetry, namely only cyclic and axis-reflection symmetry,
gives rise to a class of extremal quasiconvex quadratic forms, that are not polyconvex. Non-affine boundary conditions on the potential are identified which allow one to obtain sharp
bounds on the integrals of these extremal quasiconvex quadratic forms of $\nabla u$ over an arbitrary region.
\end{abstract}

\textbf{Keywords:}\ \   Extremal quasiconvex quadratic forms, polyconvexity, rank-one convexity

\newtheorem{Theorem}{Theorem}[section]
\newtheorem{Lemma}[Theorem]{Lemma}
\newtheorem{Corollary}[Theorem]{Corollary}
\newtheorem{Remark}[Theorem]{Remark}
\newtheorem{Definition}[Theorem]{Definition}

\section{Introduction}
\label{sec:intro}

In his work in 1952, Morrey proved that convexity implies quasiconvexity which itself implies rank-one convexity,
see [\ref{bib:Mor1},\ref{bib:Mor2}]. In 1977, Ball introduced polyconvexity which was proven
to be an intermediate condition between convexity and quasiconvexity [\ref{bib:Ball1}]. For the case when the function
$f\colon\mathbb R^{N\times n}\to\mathbb R$ is quadratic, i.e., $f(\xi)=(M\xi;\xi)$ for some
symmetric matrix $M\in R^{(N\times n)\times(N\times n)} $, the rank-one convexity of $f$ is actually equivalent to its quasiconvexity,
which was established by Van Hove in [\ref{bib:VanHove1},\ref{bib:VanHove2}] and the
rank-one convexity of $f$ is equivalent to the inequality $f(x\otimes y)\geq 0$ that must hold for
all $x\in\mathbb R^N,y\in\mathbb R^n$ e.g., see [\ref{bib:Dac}, page 192]. Also polyconvexity is then equivalent
to the quadratic form being the sum of convex and null-Lagrangian quadratic forms [\ref{bib:Dac}, page 192, Lemma 5.72]. 
The case
 $n=2$ or $N=2$ received considerable attention by many authors and Terpstra [\ref{bib:Terp2}] showed
that in this case quasiconvexity implies polyconvexity.
Terpstra also showed in [\ref{bib:Terp2}] that if $n\geq 3$ or $N\geq3$, then there exist quadratic forms
that are quasiconvex but not polyconvex, but his proof did not deliver an explicit example of such
a quadratic form. An explicit example of such a quadratic form is due to Serre [\ref{bib:Ser1},\ref{bib:Ser2}], see also Ball [\ref{bib:Ball2}].
Here we will obtain a quasiconvex quadratic form which is especially easy to show that it is not polyconvex.

A special case of quasiconvex quadratic forms are the so called extremal ones introduced by Milton in [\ref{bib:Mil3}, page 87], see also [\ref{bib:Mil1}, Section 25.2].
There are 3 definitions of extremals (with only the first being introduced before), namely:
\begin{Definition}
\label{def:1}
A quadratic quasiconvex form is called an extremal if one cannot subtract a rank-one quadratic form from it while preserving the quasiconvexity of the quadratic form
\end{Definition}

\begin{Definition}
\label{def:2}
A quadratic quasiconvex form is called an extremal if one cannot subtract a quasiconvex quadratic form from it other than a multiple
of itself modulo Null-Lagrangians, while preserving the quasiconvexity of the quadratic form.
\end{Definition}
In the case $n=2$ or $N=2$ a rank-one quadratic form is itself extremal in the sense of Definition~\ref{def:2} but not in the sense of Definition~\ref{def:1}. So the two definitions are not
equivalent, but it is not known if Definition~\ref{def:1} implies Definition~\ref{def:2}.  Therefore one can use also the following third definition:

 \begin{Definition}
\label{def:3}
A quadratic quasiconvex form is called an extremal if it is an extremal in the sense of both Definition~\ref{def:1}
and Definition~\ref{def:2}.
\end{Definition}
Note that if a quadratic form is extremal in the sense of Definition~\ref{def:2} but not in the sense of Definition~\ref{def:1} then the quasiconvex quadratic form must be polyconvex
since a rank-one quadratic form is quasiconvex. So a quasiconvex quadratic form which is extremal in the sense of Definition~\ref{def:2} and which is not polyconvex is automatically
extremal in the sense of all three definitions.

Extremal quasiconvex functions occupy a privleged position of being at the boundary between null-Lagrangians and strictly quasiconvex functions. As such they share
properties with null-Lagrangians that are not shared by strictly quasiconvex functions. In particular, sharp lower bounds on the integral of $f(\nabla u)$ over a
body $\Omega$ can be obtained not just for affine boundary conditions on $u(x)$ (i.e. $u(x)=Ax$ on $\partial\Omega$, 
where $\Omega\in\mathbb R^n, A\in\mathbb R^{N\times n},$ $u\colon\overline{\Omega}\to\mathbb R^N$ and $f\colon\mathbb R^{N\times n}\to\mathbb R$), or for periodic boundary conditions on
$\nabla u$, but for other boundary conditions as well [\ref{bib:Kan.Mil},\ref{bib:Mil2}]. This is important for the following reason.
In the 1980's a lot of attention was focussed on bounding the
effective tensors of composites, and quasiconvex functions played an important role in this development: see the books [\ref{bib:Allaire},\ref{bib:Cherk},\ref{bib:Mil1},\ref{bib:Tar3}] and references therein.
In the last few years it was realized [\ref{bib:Kan.Kim.Mil}, \ref{bib:Kan.Mil}, see also \ref{bib:Mil.Ngu}, \ref{bib:Kan.Mil.Wan}, \ref{bib:Tha.Mil}, \ref{bib:Kan.Kim.Lee.Li.Mil}]
that similar methods can be useful for bounding of the Dirichlet to Neumann map of inhomogeneous bodies
(which for a two-phase body can be applied in an inverse manner to bound the volume of an inclusion from Dirichlet and Neumann boundary data).
Clearly there is an interest
in obtaining sharp bounds on the Dirchlet to Neumann map not just for affine data but for other Dirichlet boundary conditions as well. The use of null-Lagrangians,
and extremal quasiconvex functions (but not strictly quasiconvex functions) allows one to do this. It is thus important not only to find extremal quasiconvex functions,
but also to identify the special boundary conditions which allow one to obtain a sharp lower bound on the integral of $f(\nabla u)$ over $\Omega$, and to calculate this bound. Here we will present the first explicit example of an extremal quasiconvex function of $\nabla u$ which is not a null-Lagrangian, we will identify those special boundary conditions, and we will evaluate the integral  of $f(\nabla u)$ over $\Omega$ for these boundary conditions.

The identification of an explicit class of extremal quasiconvex quadratic forms is also potentially an important stepping stone. If all such extremals could be
explicitly identified then one would have an explicit characterication of all quasiconvex quadratic forms. It remains to be seen whether this can be done, but in any
case the identification of explicit and interesting quasiconvex functions is useful for deriving analytical results, as opposed to numerical results, for bounding
the response of inhomogeneous composites and bodies.

In [\ref{bib:All.Kohn}], Allaire and Kohn used extremals for strain fields (extremal in the sense that one cannot subtract a symmetrized rank-one
quadratic form from it while preserving the quasiconvexity of the quadratic form) to derive optimal bounds on the elastic energy for two-phase composites.
One can see that their example of extremals are polyconvex though.
In [\ref{bib:Mil1}, page 546], Milton provided an example of a quadratic form
$$  f(A)=\Tr(A^2)+\Tr(A^TA)-[\Tr(A)]^2 $$
 for the case $n=N=3,$ that is a quasiconvex extremal on
$3\times3$ divergence-free periodic matrix fields $A(x)$ (with its three rows, not columns, being divergence free) in the sense that the inequality
$$\langle f(A)\rangle\geq f(\langle A\rangle)$$
holds for any periodic field $A\colon\mathbb R^3\to \mathbb R^{3\times 3}$ that is divergence-free
 where the angular brackets denote the volume average over the cell of periodicity: see also [\ref{bib:Kan.Mil}]. He proved the quasiconvexity of this example by using the ideas of Murat and Tartar [\ref{bib:Tar1},\ref{bib:Tar2},\ref{bib:Mur.Tar}].
 Kang and Milton [\ref{bib:Kan.Mil}] then used this example of an extremal to
 bound the volume fraction of an inclusion in a two-phase three dimensional body,
and they obtained the following result for the integral of the quasiconvex quadratic form over a body, with non-trivial boundary conditions on the fields.
Suppose $A(x)\colon\overline{\Omega}\to\mathbb R^3,$ where $\Omega\subset\mathbb R^3,$  satisfies the boundary condition
$An=q$ at the boundary $\partial\Omega$ where $n$ is the outward normal to the boundary and $q$ has components
$$  q_\ell=\left(A^0_{\ell k}+\frac{\partial^2\alpha}{\partial x_\ell \partial x_k}-\delta_{\ell k}\Delta\alpha+\epsilon_{\ell k m}\frac{\partial\beta}{\partial x_m}\right)n_k, $$
for some scalar functions $\alpha(x)$ and $\beta(x)$ defined in the neighborhood of $\partial\Omega$,  and for some constants $A^0_{\ell k}$, where
$\epsilon_{ijm}$ is the completely antisymmetric Levi-Civita tensor taking the value $+1$ when $ijm$ is an even permutation of $123$, $-1$ when it is an odd permutation, and $0$ otherwise. Then Kang and Milton proved one has the sharp inequality
$$ \int_{\Omega}f(A(x))\,dx\geq \int_{\partial\Omega}q\cdot[(A^0+(A^0)^T-\Tr(A^0)I)x+2\nabla\alpha]\,dS $$
in which $I$ is the identity matrix, and moreover showed there is a huge range of fields $A(x)$ for which one has equality in this inequality.

For the gradient problem Milton proposed an algorithm for finding extremals in [\ref{bib:Mil2}], however no explicit example of a quasiconvex extremal quadratic form was given. The key ingredient in the algorithm
 is the following: given a quasiconvex quadratic form $f$, one tries  subtracting a rank-one positive definite
 quadratic form from $f$ such that the new quadratic form remains quasiconvex. A formula for the maximal possible coefficient of the rank-one quadratic form
 to be subtracted and the condition on the rank-one quadratic form that this coefficient be non-zero is found in [\ref{bib:Mil2}]: see also the earlier work
of [\ref{bib:Kohn.Lip}, \ref{bib:Mil3}]. The main contribution of this paper is the delivery of
 an explicit example of such an extremal for the case $n=3,N=3$ (recall that if $n\leq 2$ or $N\leq 2$ such a quadratic form does not exist).

In searching
for extremals it makes sense to first look for them amongst functions $f$ with a lot of symmetry. Then if $f(x\otimes y)$ is zero
for one pair $(x,y)$, it will also automatically be zero for all other $(x,y)$ determined by the symmetry group: this makes it likely
that $f(x\otimes y)$ has a lot of degeneracy which may make it impossible to subtract a rank-one quadratic form from it while retaining quasiconvexity.
The function $f(x\otimes y)$ which by an abuse
of notation we call $f(x,y)$ is the same for all functions $f(\xi)$ that differ by a null-Lagrangian, and we shall say
that $f$ has swap symmetry iff
\begin{equation} f(x,y)=f(y,x),
\end{equation}
cyclic symmetry iff
\begin{equation} f(x_1,x_2,x_3,y_1,y_2,y_3)=f(x_2,x_3,x_1,y_2,y_3,y_1),
\end{equation}
and axis-reflection symmetry iff
\begin{eqnarray} f(x_1,x_2,x_3,y_1,y_2,y_3) & = & f(-x_1,x_2,x_3,-y_1,y_2,y_3) \nonumber \\
& = & f(x_1,-x_2,x_3,y_1,-y_2,y_3) \nonumber \\
& = & f(x_1,x_2,-x_3,y_1,y_2,-y_3).
\end{eqnarray}
Of course there are many other types of symmetry one could consider: we could for instance have $f(x,y)=f(Ax+Cy,By+Dx)$ for all $x$ and $y$ and for all
$$ \begin{bmatrix}
A & C \\
D & B
\end{bmatrix},
$$
in some group of $6\times 6$ matrices, where the group product is the usual matrix product. The idea is to find a class of symmetries, which has sufficiently few free parameters to be amenable to analysis, yet enough
free parameters to include extremals which are not polyconvex. In section~\ref{sec:cubic.symmetry}, we first look for extremals in the class of quadratic forms with
swap, cyclic and axis-reflection symmetry: these can be associated with a rank-four tensor which has linear elastic cubic symmetry, that is determined by three parameters.
  We prove that there is no extremal in this class, i.e., the following theorem holds:
 \begin{Theorem}
\label{th:cubic.quadr}
Assume that $f(\xi)=\xi T\xi^T$ is a quadratic form, where $T$ is a linear elastic cubic symmetric rank-four tensor and $\xi=\{\xi_{ij}\}_{i,j=1}^3.$
Then if $f$ is quasiconvex it can be written as a sum of convex and Null-Lagrangian quadratic forms.
\end{Theorem}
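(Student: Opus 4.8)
The plan is to use the symmetry to reduce both ``quasiconvex'' and ``polyconvex'' to explicit inequalities on the three parameters of the family, and then to observe that these inequalities coincide.

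First I would put $f$ in normal form. Let $G$ be the symmetry group generated by cyclic permutation of the coordinate axes, the three axis reflections, and the transpose; it acts orthogonally on $\mathbb{R}^{3\times 3}$ by $\xi\mapsto Q\xi Q^{T}$, and under this action $\mathbb{R}^{3\times 3}$ decomposes into the trace line $\mathbb{R}I$, the space $E$ of traceless diagonal matrices, the space $S$ of symmetric off-diagonal matrices, and the space $W$ of antisymmetric matrices, with $\mathbb{R}I,E,S$ pairwise non-isomorphic $G$-irreducibles. Since $T$ has the minor symmetries of an elasticity tensor, $f$ depends only on $\operatorname{sym}\xi$, and (as already noted in the introduction, or by Schur's lemma) it must have the form
\begin{equation}
f(\xi)=\frac{\lambda_{1}}{3}(\Tr\xi)^{2}+\lambda_{2}\,\|P_{E}\xi\|^{2}+\lambda_{3}\,\|P_{S}\xi\|^{2},
\end{equation}
where $P_{E},P_{S}$ are the orthogonal projections onto $E,S$ and $\lambda_{1},\lambda_{2},\lambda_{3}\in\mathbb{R}$.

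Next I would reduce the choice of null-Lagrangian. By the lemma of Dacorogna quoted in the introduction, $f$ is polyconvex iff $f-L$ is a convex (equivalently positive semidefinite) quadratic form for some quadratic null-Lagrangian $L$, and the quadratic null-Lagrangians on $\mathbb{R}^{3\times 3}$ are exactly the linear combinations of the nine $2\times 2$ subdeterminants, i.e.\ the entries of $\operatorname{cof}\xi$. Because $\operatorname{cof}(Q\xi Q^{T})=Q\,(\operatorname{cof}\xi)\,Q^{T}$ for orthogonal $Q$, this nine-dimensional space carries, under $G$, the same representation as $\mathbb{R}^{3\times 3}$, whose only $G$-invariant line is $\mathbb{R}I$; the corresponding null-Lagrangian is $L_{1}(\xi):=\Tr(\operatorname{cof}\xi)=\tfrac12[(\Tr\xi)^{2}-\Tr(\xi^{2})]$. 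Now if $f-L\succeq 0$ for some quadratic null-Lagrangian $L$, then averaging $f-L$ over $G$ (which is legitimate since $G$ acts orthogonally and fixes $f$, and the positive semidefinite cone is convex and $G$-invariant) yields $f-\bar L\succeq 0$ with $\bar L$ a $G$-invariant quadratic null-Lagrangian, hence $\bar L=tL_{1}$ for some scalar $t$. Therefore \emph{$f$ is polyconvex if and only if $f-tL_{1}\succeq 0$ for some $t\in\mathbb{R}$.}

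Finally I would expand $L_{1}$ in the same decomposition,
\begin{equation}
L_{1}(\xi)=\frac13(\Tr\xi)^{2}-\frac12\|P_{E}\xi\|^{2}-\frac12\|P_{S}\xi\|^{2}+\frac12\|P_{W}\xi\|^{2},
\end{equation}
so that $f-tL_{1}\succeq 0$ precisely when $t\le\lambda_{1}$, $\ t\ge -2\lambda_{2}$, $\ t\ge -2\lambda_{3}$ and $t\le 0$; such a $t$ exists if and only if
\begin{equation}
\lambda_{2}\ge 0,\qquad \lambda_{3}\ge 0,\qquad \lambda_{1}+2\lambda_{2}\ge 0,\qquad \lambda_{1}+2\lambda_{3}\ge 0 .
\end{equation}
On the other side, evaluating $f$ on the four rank-one matrices $e_{2}\otimes e_{3}$, $(e_{1}+e_{2})\otimes(e_{1}-e_{2})$, $e_{1}\otimes e_{1}$ and $(e_{1}+e_{2}+e_{3})\otimes(e_{1}+e_{2}+e_{3})$ gives, up to positive factors, exactly $\lambda_{3}$, $\lambda_{2}$, $\lambda_{1}+2\lambda_{2}$ and $\lambda_{1}+2\lambda_{3}$, so that quasiconvexity---equivalently $f(x\otimes y)\ge 0$ for all $x,y$, by Van Hove's theorem and the criterion quoted above---forces the very same four inequalities. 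Since polyconvexity always implies quasiconvexity, the implications
\[
\text{quasiconvex}\ \Longrightarrow\ \text{the four inequalities}\ \Longrightarrow\ \text{polyconvex}\ \Longrightarrow\ \text{quasiconvex}
\]
close up, which proves the theorem and, incidentally, shows that in this class all three notions are described by the same four linear inequalities on $(\lambda_{1},\lambda_{2},\lambda_{3})$. The only step carrying genuine content is the reduction of the null-Lagrangian: recognizing that the quadratic null-Lagrangians are spanned by the cofactor entries, that they transform like $\mathbb{R}^{3\times 3}$, and that $G$-averaging therefore collapses the choice of null-Lagrangian to the single one-parameter family $tL_{1}$. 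I expect this to be the main obstacle; the normal form for $f$, the expansion of $L_{1}$, the semidefiniteness bookkeeping, and the four rank-one evaluations are elementary and I would carry them out as routine computation. One subtlety worth flagging is that, although $f$ itself is blind to the antisymmetric part $P_{W}\xi$, the null-Lagrangian $L_{1}$ is not, so one must track its action there; this is what produces the constraint $t\le 0$, in accordance with the general fact that polyconvexity implies quasiconvexity.
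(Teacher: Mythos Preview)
Your argument is correct, and it reaches the conclusion by a genuinely different route from the paper's.

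The paper works in the natural parameters $\alpha=T_{1111}$, $\beta=T_{1122}$, $\gamma=T_{1212}$, extracts from rank-one convexity the inequalities $\alpha\ge 0$, $\gamma\ge 0$ and $-\tfrac{\alpha}{2}-\gamma\le\beta+\gamma\le\alpha+\gamma$, and then, splitting on the sign of $\beta+\gamma$, writes down by hand an explicit convex-plus-null-Lagrangian decomposition of $f$ in each case. Your approach instead diagonalizes the problem via the $G$-decomposition $\mathbb{R}^{3\times 3}=\mathbb{R}I\oplus E\oplus S\oplus W$, puts $f$ in the normal form $(\lambda_1,\lambda_2,\lambda_3)$, and---this is the real gain---uses $G$-averaging on the nine-dimensional space of quadratic null-Lagrangians to cut the search for a polyconvex splitting down to the single scalar $t$ in $f-tL_1$. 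The four linear inequalities you obtain for polyconvexity then match on the nose the four rank-one evaluations, and the cycle closes. In the paper's variables your $(\lambda_1,\lambda_2,\lambda_3)=(\alpha+2\beta,\alpha-\beta,2\gamma)$, and your four inequalities $\lambda_2\ge 0$, $\lambda_3\ge 0$, $\lambda_1+2\lambda_2\ge 0$, $\lambda_1+2\lambda_3\ge 0$ are precisely the paper's $\alpha\ge\beta$, $\gamma\ge 0$, $\alpha\ge 0$, $\alpha+2\beta+4\gamma\ge 0$.

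What each buys: the paper's argument is elementary and yields an explicit convex-plus-null-Lagrangian formula (useful if one wants the decomposition itself); yours is cleaner, explains structurally \emph{why} the cubic class is too rigid to contain non-polyconvex quasiconvex forms (the symmetry collapses the null-Lagrangian freedom to one parameter), and would transplant more readily to other symmetry classes. One cosmetic point: the transpose map $\xi\mapsto\xi^{T}$ is not of the form $\xi\mapsto Q\xi Q^{T}$, so it sits slightly awkwardly in your description of $G$; but since the minor symmetries already force $f$ to depend only on $\operatorname{sym}\xi$, and since the cubic point group alone already pins the invariant null-Lagrangian to $tL_1$, this has no effect on the proof.
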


 Then in section~\ref{sec:orthot.symmetry} we drop the swap symmetry requirement and seek an extremal in the class of quadratic forms with cyclic and axis-reflection symmetry.
These quadratic forms are determined by four parameters and among them there is an extremal:
 \begin{Theorem}
\label{th:orthotr.extremal}
The quadratic form $Q(\xi)=(\xi_{11}^2+\xi_{22}^2+\xi_{33}^2-2\xi_{11}\xi_{22}-2\xi_{22}\xi_{33}-2\xi_{33}\xi_{11})+\xi_{12}^2+\xi_{23}^2+\xi_{31}^2$
has the following properties:
\begin{itemize}
\item[(i)] $Q$ is quasiconvex,
\item[(ii)] $Q$ is not polyconvex,
\item[(iii)] $Q$ is an extremal, in all three senses of extremal.
\end{itemize}
\end{Theorem}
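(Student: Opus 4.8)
The plan has three parts, matching (i)--(iii).

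\medskip
\noindent\emph{Part (i) (quasiconvexity).} I would evaluate $Q$ on rank-one matrices. Putting $\xi=x\otimes y$, i.e. $\xi_{ij}=x_iy_j$, a direct substitution yields $Q(x\otimes y)=y^{T}M(x)y$ with
\[
M(x)=\begin{bmatrix}x_1^2+x_3^2 & -x_1x_2 & -x_1x_3\\[2pt] -x_1x_2 & x_1^2+x_2^2 & -x_2x_3\\[2pt] -x_1x_3 & -x_2x_3 & x_2^2+x_3^2\end{bmatrix}.
\]
By the Van Hove result recalled in the Introduction, quasiconvexity of the quadratic form $Q$ is equivalent to $Q(x\otimes y)\ge 0$ for all $x,y\in\mathbb R^3$, i.e. to $M(x)$ being positive semidefinite for every $x$. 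I would check this by showing that all three coefficients of the characteristic polynomial of $M(x)$ are nonnegative: $\Tr M(x)=2|x|^2\ge 0$; the sum of the $2\times2$ principal minors works out to $|x|^4\ge 0$; and $\det M(x)=x_1^4x_2^2+x_1^2x_3^4+x_2^4x_3^2-3x_1^2x_2^2x_3^2\ge 0$ by the arithmetic--geometric mean inequality applied to the three displayed monomials. A real symmetric $3\times3$ matrix whose characteristic polynomial has nonnegative coefficients has no negative eigenvalue, so $M(x)$ is positive semidefinite and $Q$ is quasiconvex.

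\medskip
\noindent\emph{Part (ii) (not polyconvex).} Recall (the lemma of Dacorogna quoted in the Introduction) that $Q$ is polyconvex iff $Q=C+L$ with $C$ a nonnegative quadratic form and $L$ a null-Lagrangian, i.e. $L(\xi)=\langle B,\operatorname{cof}\xi\rangle$ for some constant $3\times3$ matrix $B$. I would assume such a decomposition exists and derive a contradiction. Writing $E_{ij}$ for the matrix with a single $1$ in position $(i,j)$, set $w_1=\tfrac12E_{12}+2E_{21}$, $w_2=\tfrac12E_{23}+2E_{32}$, $w_3=\tfrac12E_{31}+2E_{13}$. Then $\operatorname{cof}I=I$ and $\operatorname{cof}w_1=-E_{33}$, $\operatorname{cof}w_2=-E_{11}$, $\operatorname{cof}w_3=-E_{22}$, so $\operatorname{cof}I+\operatorname{cof}w_1+\operatorname{cof}w_2+\operatorname{cof}w_3=0$ and hence $L(I)+\sum_{j=1}^{3}L(w_j)=0$ for \emph{every} null-Lagrangian $L$. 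On the other hand $Q(I)=-3$ and $Q(w_j)=\tfrac14$ for $j=1,2,3$, so $Q(I)+\sum_{j=1}^3Q(w_j)=-\tfrac94$. Subtracting the two identities gives $C(I)+\sum_{j=1}^3C(w_j)=-\tfrac94<0$, contradicting $C\ge 0$. Hence $Q$ is not polyconvex. (Equivalently, $P=\mathrm{vec}(I)\mathrm{vec}(I)^T+\sum_j\mathrm{vec}(w_j)\mathrm{vec}(w_j)^T$ is a positive semidefinite $9\times9$ matrix orthogonal to all null-Lagrangian quadratic forms on which $Q$ is strictly negative.)

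\medskip
\noindent\emph{Part (iii) (extremality).} By the remark in the Introduction, a quasiconvex, non-polyconvex quadratic form that is extremal in the sense of Definition~\ref{def:2} is extremal in all three senses, so by (i) and (ii) it suffices to prove extremality in sense~\ref{def:2}. First I would describe the zero set $Z=\{(x,y):Q(x\otimes y)=0\}=\{(x,y):y\in\ker M(x)\}$. From the formula for $\det M(x)$ in (i), $M(x)$ is singular exactly when $x$ lies on a coordinate axis or $|x_1|=|x_2|=|x_3|$, and in those cases $\ker M(x)$ is one-dimensional: $\ker M(se_1)=\mathbb R e_3$ and its cyclic images, and $\ker M\big(s(\epsilon_1,\epsilon_2,\epsilon_3)\big)=\mathbb R(\epsilon_1,\epsilon_2,\epsilon_3)$ for $\epsilon_i=\pm1$. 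Hence $Q(x\otimes y)=0$ precisely when $x\otimes y$ is a scalar multiple of one of $E_{13},E_{21},E_{32}$ or of $v\otimes v$ with $v\in\{(1,1,1),(1,1,-1),(1,-1,1),(-1,1,1)\}$. Now suppose $Q=Q_1+Q_2$ with $Q_1,Q_2$ quasiconvex; since both are nonnegative on rank-one matrices and sum to $Q$, each vanishes at all seven of these rank-one matrices. At a rank-one matrix $\eta_0=\alpha\otimes\beta$ at which a quasiconvex quadratic form $R$ attains its minimum $0$ over the rank-one cone, first-order optimality along the curves $t\mapsto(\alpha+ta)\otimes(\beta+tb)$ gives $W\beta=0$ and $W^{T}\alpha=0$ with $W=\nabla R(\eta_0)$ the $3\times3$ gradient, plus a second-order (rank-one Hessian nonnegativity) condition. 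I would impose these on $Q_2$ at all seven base points --- $W_2e_3=0,\ W_2^{T}e_1=0$ at $E_{13}$ and its cyclic analogues at $E_{21},E_{32}$, and $W_2v=0$ at each $v\otimes v$ --- together with the same conditions for $Q-Q_2$ and the second-order data, and conclude that $Q_2$ must equal a scalar multiple of $Q$ modulo the nine-dimensional space of null-Lagrangian quadratic forms, which is exactly extremality in sense~\ref{def:2}.

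\medskip
The main obstacle is the last step: showing that the zero set $Z$ together with quasiconvexity of the two summands is rigid enough to pin $Q_2$ down up to a scalar and a null-Lagrangian, rather than leaving a larger family --- the first-order conditions at the seven base points alone do not suffice, so the second-order conditions and the specific local structure of $Q$ near each base point must be used. Parts (i) and (ii) are short computations; the extremality statement carries the real content, and it is probably cleanest to organize it around Milton's explicit formula (from the cited work) for the largest rank-one quadratic form that can be subtracted from a quasiconvex quadratic form while keeping it quasiconvex, showing that formula returns zero in every direction.
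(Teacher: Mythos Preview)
Your treatment of (i) is correct and is essentially the paper's argument with $x$ and $y$ interchanged: the paper writes $Q(x\otimes y)=xT(y)x^T$ and checks the three $2\times 2$ principal minors and the determinant of $T(y)$ directly, while you check the coefficients of the characteristic polynomial of $M(x)$; both routes rely on AM--GM for $\det\ge 0$.

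Your proof of (ii) is correct but different from the paper's. The paper simply observes that in any inequality $Q(\eta)\ge\sum_i\alpha_iM_i(\eta)$ the right-hand side is linear in each of $\eta_{13},\eta_{21},\eta_{32}$ while the left-hand side does not involve them at all, so all $\alpha_i$ must vanish; then $Q(I)=-3<0$ gives the contradiction. Your four-matrix construction with $\sum\operatorname{cof}=0$ is a valid and pleasant alternative, though the paper's argument is shorter.

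For (iii) there is a genuine gap, which you acknowledge. The first-order conditions $W\beta=0$, $W^T\alpha=0$ at the seven rank-one zeros, even together with the analogous conditions for $Q-Q_2$, leave far too many free parameters in $Q_2$; you then appeal to unspecified ``second-order data'' and to ``Milton's explicit formula'' without showing how either closes the gap. The paper proceeds quite differently. After reducing (as you do) to the six variables $\xi_{11},\xi_{22},\xi_{33},\xi_{12},\xi_{23},\xi_{31}$, it uses the test vectors $x=(1/t,t^2s^2,0)$, $y=(t^2,1/(ts),0)$ and lets $t\to\infty$ in $0\le Q_1(x\otimes y)\le Q(x\otimes y)$ to force $a_{11}a_{22}\ge a_{12}^2$ and $(1-a_{11})(1-a_{22})\ge(1+a_{12})^2$; summing the square roots yields $a_{11}=a_{22}$ and $a_{12}=-a_{11}$, so the diagonal block is $\alpha$ times that of $Q$. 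It then analyzes the $2\times 2$ minors of the $y$-matrix of $Q_1$ to kill six cross-terms, computes $\det T_{Q_1}(y)$ explicitly and reads off from the coefficients of $y_1^4y_2^2$, $y_2^4y_3^2$, $y_3^4y_1^2$ that three more parameters vanish, and finally uses the four zeros $Q((\pm1,\pm1,\pm1)\otimes(\pm1,\pm1,\pm1))=0$ together with $\det T_{Q_1}(y)\ge 0$ and AM--GM to force $a=b=c=\alpha$, hence $Q_1=\alpha Q$. Your sketch does not supply any substitute for this chain of computations, and in particular the suggestion to use Milton's subtraction formula is a different program that you do not carry out.
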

Furthermore we have the following Corollary
\begin{Corollary}
\label{co:orthotr.extremal}
The quadratic form $Q(\xi)=(\xi_{11}^2+\xi_{22}^2+\xi_{33}^2-2\xi_{11}\xi_{22}-2\xi_{22}\xi_{33}-2\xi_{33}\xi_{11})+\alpha\xi_{12}^2+\beta\xi_{23}^2+\gamma\xi_{31}^2$
is extremal in the sense of Definition~\ref{def:2}, where $\alpha,\beta,\gamma>0$ and $\alpha\beta\gamma=1.$
\end{Corollary}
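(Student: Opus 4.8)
The plan is to deduce the Corollary from Theorem~\ref{th:orthotr.extremal} by an invertible linear change of variables on $3\times 3$ matrices. Write $Q_{\alpha,\beta,\gamma}$ for the quadratic form in the statement, so that $Q_{1,1,1}$ is the form $Q$ of Theorem~\ref{th:orthotr.extremal}. For positive reals $d_1,d_2,d_3$ put $D=\mathrm{diag}(d_1,d_2,d_3)$, $E=\mathrm{diag}(d_1^{-1},d_2^{-1},d_3^{-1})$, and let $L\colon\mathbb R^{3\times3}\to\mathbb R^{3\times3}$ be the linear map $L(\xi)=D\xi E$. Then $L$ amounts to the entrywise rescaling $\xi_{ij}\mapsto(d_i/d_j)\xi_{ij}$, which leaves the diagonal entries $\xi_{ii}$ unchanged, multiplies $\xi_{12},\xi_{23},\xi_{31}$ by $d_1/d_2,\,d_2/d_3,\,d_3/d_1$, and produces no new monomials (in particular no $\xi_{12}\xi_{21}$-type cross terms). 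Hence
\[
Q_{1,1,1}\circ L \;=\; Q_{\alpha',\beta',\gamma'},\qquad \alpha'=(d_1/d_2)^2,\ \ \beta'=(d_2/d_3)^2,\ \ \gamma'=(d_3/d_1)^2,
\]
and automatically $\alpha'\beta'\gamma'=1$. Conversely, given $\alpha,\beta,\gamma>0$ with $\alpha\beta\gamma=1$, the choice $d_1=\sqrt{\alpha\beta}$, $d_2=\sqrt{\beta}$, $d_3=1$ yields $Q_{1,1,1}\circ L=Q_{\alpha,\beta,\gamma}$; thus the hypothesis $\alpha\beta\gamma=1$ is exactly what is needed for $Q_{\alpha,\beta,\gamma}$ to be reachable from $Q_{1,1,1}$ in this way.

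Next I would record that $f\mapsto f\circ L$ is a linear bijection on the space of quadratic forms on $\mathbb R^{3\times3}$, with inverse $f\mapsto f\circ L^{-1}$, $L^{-1}(\xi)=D^{-1}\xi E^{-1}$, which preserves all the structure relevant to Definition~\ref{def:2}. Indeed $L(x\otimes y)=(Dx)\otimes(Ey)$, so $L$ restricts to the bijection $(x,y)\mapsto(Dx,Ey)$ of $\mathbb R^3\times\mathbb R^3$; since for a quadratic form quasiconvexity is equivalent to nonnegativity on all rank-one matrices $x\otimes y$, it follows that $f$ is quasiconvex if and only if $f\circ L$ is. Likewise a quadratic form is a null-Lagrangian exactly when it vanishes on all rank-one matrices, so $N$ is a null-Lagrangian if and only if $N\circ L$ is. Finally, by linearity $(g-f)\circ L=(g\circ L)-(f\circ L)$, so $L$ preserves the relation ``$g-f$ is quasiconvex''. (The same reasoning shows $L$ preserves convexity, hence polyconvexity, so each $Q_{\alpha,\beta,\gamma}$ with $\alpha\beta\gamma=1$ is again \emph{not} polyconvex, and therefore in fact extremal in all three senses; only Definition~\ref{def:2} is claimed in the Corollary.)

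Now I would put the two pieces together. Fix $\alpha,\beta,\gamma>0$ with $\alpha\beta\gamma=1$ and choose $L$ as above with $Q_{1,1,1}\circ L=Q_{\alpha,\beta,\gamma}$; by the previous paragraph $Q_{\alpha,\beta,\gamma}$ is quasiconvex. Suppose $P$ is a quasiconvex quadratic form such that $Q_{\alpha,\beta,\gamma}-P$ is also quasiconvex. Then $P\circ L^{-1}$ is quasiconvex and $Q_{1,1,1}-P\circ L^{-1}=(Q_{\alpha,\beta,\gamma}-P)\circ L^{-1}$ is quasiconvex. Since $Q_{1,1,1}$ is an extremal in the sense of Definition~\ref{def:2} by Theorem~\ref{th:orthotr.extremal}(iii), there exist a scalar $t$ and a null-Lagrangian $N$ with $P\circ L^{-1}=tQ_{1,1,1}+N$. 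Composing with $L$ gives $P=tQ_{\alpha,\beta,\gamma}+N\circ L$, where $N\circ L$ is a null-Lagrangian, i.e.\ $P$ is a multiple of $Q_{\alpha,\beta,\gamma}$ modulo null-Lagrangians. Hence $Q_{\alpha,\beta,\gamma}$ is an extremal in the sense of Definition~\ref{def:2}, as claimed.

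The argument is a change of variables, so I do not expect a serious obstacle. The two points that need care are: (a) confirming that the diagonal conjugation $L(\xi)=D\xi E$ sends $Q_{1,1,1}$ to $Q_{\alpha,\beta,\gamma}$ cleanly (no spurious cross terms) and that $\alpha\beta\gamma=1$ is precisely the attainable locus; and (b) checking that $f\mapsto f\circ L$ genuinely preserves quasiconvexity, null-Lagrangians, and the partial order ``difference is quasiconvex'', which is what makes Definition~\ref{def:2}-extremality transport along $L$. Both are routine once the rank-one characterizations quoted in the Introduction are invoked.
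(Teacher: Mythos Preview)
Your proposal is correct and follows essentially the same approach as the paper: the paper introduces the notion of \emph{rank-one equivalence} (two forms $f,g$ with $f(x,y)=g(Ax,By)$ for invertible $A,B$), records in a lemma that this relation preserves quasiconvexity and Definition~\ref{def:2} extremality, and then shows via the diagonal substitution $x_i\mapsto\lambda_i x_i$, $y_i\mapsto y_i/\lambda_i$ that $Q_{\alpha,\beta,\gamma}$ and $Q_{\alpha',\beta',\gamma'}$ are rank-one equivalent whenever $\alpha\beta\gamma=\alpha'\beta'\gamma'$. Your map $L(\xi)=D\xi E$ with $E=D^{-1}$ is exactly this substitution written at the level of matrices, and your transport argument is the content of the paper's Lemma~\ref{lem:rank-one.equiv}(iv)--(v); the only difference is that you carry out the argument directly rather than first isolating the abstract equivalence relation.
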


 The problem of characterizing all such extremals is a task for the future. Finally, in section~\ref{sec:special.fields}
 we find the so called special fields introduced by Milton in [\ref{bib:Mil2}],
 for the extremals we have found, which leads to sharp inequalities on the integral of $Q(\nabla u)$ over a region $\Omega$ for certain
boundary conditions on $u$, not just affine ones.

\section{Quadratic forms with linear elastic cubic symmetry}
\label{sec:cubic.symmetry}

In this section we prove Theorem~\ref{th:cubic.quadr}, that is if the fourth-order tensor $T$ of an associated quadratic form $f$ has linear elastic cubic symmetry,
then the quasiconvexity of the quadratic form $f$ implies that $f$ is in fact a sum of convex and null-Lagrangian quadratic forms.

\textbf{Proof of Theorem~\ref{th:cubic.quadr}} If the rank four tensor $T$ has linear elastic cubic symmetry then it has 3 independent variables, namely
it satisfies the following equalities:
\begin{align*}
&T_{1111}=T_{2222}=T_{3333}=\alpha,\\
&T_{1122}=T_{2211}=T_{1133}=T_{3311}=T_{2233}=T_{3322}=\beta,\\
&T_{1212}=T_{2121}=T_{1313}=T_{3131}=T_{2313}=T_{3232}=\gamma,\\
&T_{1221}=T_{2112}=T_{1331}=T_{3113}=T_{2332}=T_{3223}=\gamma,\\
\end{align*}
and the entries that do not appear in the above equalities vanish.
Thus using the above identities we obtain for $f$ that,
\begin{eqnarray}
f(x\otimes y)&= &\sum_{i,j,k,l}T_{ijkl}x_iy_jx_ky_l \nonumber \\
&= &\alpha(x_1^2y_1^2+x_2^2y_2^2+x_3^2y_3^2)+2(\beta+\gamma)(x_1x_2y_1y_2+x_2x_3y_2y_3+x_3x_1y_3y_1) \nonumber \\
&~&+\gamma(x_1^2(y_2^2+y_3^2)+x_2^2(y_1^2+y_3^2)+x_3^2(y_1^2+y_2^2)),\\
\label{cubform}
\end{eqnarray}
for all $x=(x_1,x_2,x_3),y=(y_1,y_2,y_3)\in\mathbb R^3.$ It is clear that
$f(x\otimes y)$ can be written in the quadratic form $xT(y)x^T,$ where $T(y)$ is a $3\times3$ symmetric matrix
with entries being quadratic forms in $y.$ We will call $T(y)$ the $y$-matrix of the quadratic form $f.$
Evidently, the inequality $f(x\otimes y)\geq 0$ holds for
all $x,y\in\mathbb R^3$ if and only if the $y$-matrix $T(y)$
of $f$ is positive semi-definite for all $y\in\mathbb R^3.$ We have from the expression for $f$ that
\begin{equation}
\label{cubic.y-matrix}
T(y)=
\begin{bmatrix}
\alpha y_1^2+\gamma(y_2^2+y_3^2) & (\beta+\gamma)y_1y_2 & (\beta+\gamma)y_1y_3\\
(\beta+\gamma)y_1y_2 & \alpha y_2^2+\gamma(y_3^2+y_1^2) & (\beta+\gamma)y_2y_3\\
(\beta+\gamma)y_1y_3 & (\beta+\gamma)y_2y_3 & \alpha y_3^2+\gamma(y_1^2+y_2^2)
\end{bmatrix}.
\end{equation}
First of all from the inequalities $f((1,0,0)\otimes (1,0,0))\geq 0$ and $f((1,0,0)\otimes (0,1,0))\geq 0$
we get $\alpha\geq 0$ and $\gamma\geq0$ respectively. If $\alpha=\gamma=0,$ then obviously $f$ is quasiconvex if an only if $\beta=0$ too,
thus $f\equiv0.$ Thus we can assume without loss of generality that
\begin{equation}
\label{alpha+gamma>0}
\alpha\geq 0,\quad\gamma\geq 0,\quad \alpha+\gamma>0.
\end{equation}
From the positivity of the principal minor $M_{33}$ of $T(y)$ we get
$$M_{33}=(\alpha y_1^2+\gamma(y_2^2+y_3^2))(\alpha y_2^2+\gamma(y_3^2+y_1^2))-(\beta+\gamma)^2y_1^2y_2^2\geq 0,$$
thus the choice $y_1=y_2=1$ and $y_3=0$ gives $(\alpha+\gamma)^2\geq (\beta+\gamma)^2,$
or  $\alpha+\gamma\geq |\beta+\gamma|.$

Next, the inequality $f((1,1,1)\otimes (1,1,1))\geq 0$ gives $\beta+\gamma\geq -\frac{\alpha}{2}-\gamma,$ thus we get
\begin{equation}
\label{cubic.alpha>=beta}
-\frac{\alpha}{2}-\gamma\leq \beta+\gamma\leq \alpha+\gamma.
\end{equation}
Consider now two cases:\\
\textbf{Case1.} $\beta+\gamma\geq 0.$
In view of (\ref{cubic.alpha>=beta}) and (\ref{alpha+gamma>0}) we have $\beta+\gamma=\beta'+\gamma'$ such that $0\leq\beta'\leq \alpha$ and $0\leq\gamma'\leq \gamma,$ where $\beta'=\frac{(\beta+\gamma)\alpha}{\alpha+\gamma}$ and $\gamma'=\frac{(\beta+\gamma)\gamma}{\alpha+\gamma}$. Thus the quadratic form $f(\xi)$ can be written as

\begin{align*}
f(\xi)&=(\alpha-\beta')(\xi_{11}^2+\xi_{22}^2+\xi_{33}^2)+\beta'(\xi_{11}+\xi_{22}+\xi_{33})^2\\
&+\gamma'\left((\xi_{12}+\xi_{21})^2+(\xi_{13}+\xi_{31})^2+(\xi_{23}+\xi_{32})^2\right)\\
&+(\gamma-\gamma')(\xi_{12}^2+\xi_{21}^2+\xi_{13}^2+\xi_{31}^2+\xi_{23}^2+\xi_{32}^2)\\
&+\mathrm{Null-Lagrangian},\\
\end{align*}
as claimed.\\
\textbf{Case2.} $\beta+\gamma< 0.$ In this case again in view of (\ref{cubic.alpha>=beta}) and (\ref{alpha+gamma>0}) we have $\beta+\gamma=-(\beta'+\gamma')$ such that $0\leq\beta'\leq \frac{\alpha}{2}$ and $0\leq\gamma'\leq \gamma,$ where $\beta'=-\frac{(\beta+\gamma)\alpha}{\alpha+2\gamma}$ and $\gamma'=-\frac{2(\beta+\gamma)\gamma}{\alpha+2\gamma}$. Thus the quadratic form $f(\xi)$ can be written as
\begin{align*}
f(\xi)&=(\alpha-2\beta')(\xi_{11}^2+\xi_{22}^2+\xi_{33}^2)+\beta'\left((\xi_{11}-\xi_{22})^2+(\xi_{22}-\xi_{33})^2+(\xi_{33}-\xi_{11})^2\right)\\
&+\gamma'\left((\xi_{12}-\xi_{21})^2+(\xi_{13}-\xi_{31})^2+(\xi_{23}-\xi_{32})^2\right)\\
&+(\gamma-\gamma')(\xi_{12}^2+\xi_{21}^2+\xi_{13}^2+\xi_{31}^2+\xi_{23}^2+\xi_{32}^2)\\
&+\mathrm{Null-Lagrangian},\\
\end{align*}
as claimed.

\section{Quadratic forms with less symmetry}
\label{sec:orthot.symmetry}

Now we consider quadratic forms $f(x,y)$ with cyclic symmetry and axis-reflection symmetry. The axis-reflection symmetry ensures that terms such as $x_1^2y_1y_2$, $x_1^2y_2y_3$
and $x_1x_3y_2y_1$ cannot appear in the quadratic form. The cyclic symmetry ensures that if terms like $x_1^2y_1^2$ appear, then they must appear in the
combination $x_1^2y_1^2+x_2^2y_2^2+x_3^2y_3^2$. Such considerations imply that any quadratic form $f(x,y)$ with cyclic symmetry and axis-reflection symmetry, can be expressed as
\begin{eqnarray}
\label{cycreflect}
f(x,y)& = &a(x_1^2y_1^2+x_2^2y_2^2+x_3^2y_3^2)+b(x_1x_2y_1y_2+x_2x_3y_2y_3+x_3x_1y_3y_1) \nonumber\\
&~& +c(x_1^2y_2^2+x_2^2y_3^2+x_3^2y_1^2)+d(x_2^2y_1^2+x_3^2y_2^2+x_1^2y_3^2),
\end{eqnarray}
for some constants $a$, $b$, $c$ and $d$. If in addition $f(x,y)$ has swap symmetry then clearly $c=d$ and $f(x,y)$ has the form (\ref{cubform}) that can be associated
with a fourth order tensor $T$ having cubic symmetry. Without the swap symmetry $f(x,y)$ can be associated, modulo Null-Lagrangians, with the quadratic form
\begin{eqnarray}
f(\xi)&= & a(\xi_{11}^2+\xi_{22}^2+\xi_{33}^2)+b(\xi_{11}\xi_{22}+\xi_{22}\xi_{33}+\xi_{33}\xi_{11}) \nonumber\\
&~& +c(\xi_{12}^2+\xi_{23}^2+\xi_{31}^2)+d(\xi_{21}^2+\xi_{32}^2+\xi_{13}^2).
\label{genform}
\end{eqnarray}
The claim of Theorem~\ref{th:orthotr.extremal} is that amongst these quadratic forms, the quadratic form with $a=1$, $b=-2$, $c=1$, and $d=0$ is extremal but not polyconvex.

To establish the Corollary~\ref{co:orthotr.extremal}  we introduce the notion of \textbf{rank-one equivalence} of two quadratic forms.
\begin{Definition}
\label{def:rank-one equivalence}
Two quadratic forms $f(\xi)=\xi^TT\xi$ and $g(\xi)=\xi T'\xi^T$ are called \textbf{rank-one equivalent} if there exist
nonsingular linear transformations $A,B\colon\mathbb R^3\to \mathbb R^3$, such that
$$f(x,y)=g(Ax,By)\quad\text{for all}\quad x,y\in \mathbb R^3.$$
\end{Definition}

It is straightforward to show that this notion of equivalence is actually an equivalence relation:
\begin{Lemma}
\label{lem:rank-one.equiv}
The following properties of rank-one equivalence hold:
\begin{itemize}
\item[(i)] Any quadratic form $f$ is rank-one equivalent to itself.
\item[(ii)] If $f$ is is rank-one equivalent to $g$ then $g$ is is rank-one equivalent to $f.$
\item[(iii)] If $f$ is is rank-one equivalent to $g$ and $g$ is is rank-one equivalent to $h$ then $f$ is rank-one equivalent to $h.$
\item[(iv)]If the quadratic forms $f$ and $g$ are rank-one equivalent, then $f$ is quasiconvex if and only if $g$ is so.
\item[(v)]If the quadratic forms $f$ and $g$ are rank-one equivalent, then $f$ is an extremal quasiconvex quadratic form in the sense of Definition~\ref{def:2}
if and only if $g$ is so.
\end{itemize}
\end{Lemma}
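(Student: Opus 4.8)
The plan is to verify the five items in order. Items (i)--(iii) are purely formal. For (i) take $A=B=I$. For (ii), if $f(x,y)=g(Ax,By)$ for all $x,y\in\mathbb R^3$, the substitution $x\mapsto A^{-1}x$, $y\mapsto B^{-1}y$ gives $g(x,y)=f(A^{-1}x,B^{-1}y)$ with $A^{-1},B^{-1}$ again nonsingular. For (iii), composing $f(x,y)=g(Ax,By)$ with $g(x,y)=h(Cx,Dy)$ yields $f(x,y)=h(CAx,DBy)$ with $CA,DB$ nonsingular. The substance of the lemma is in (iv) and (v), and both rest on a single observation: because $A$ and $B$ are bijections of $\mathbb R^3$, the induced map $x\otimes y\mapsto(Ax)\otimes(By)=A(x\otimes y)B^T$ is a bijection of the set of rank-one matrices, so any property of a quadratic form that is detected only by its values on rank-one matrices is preserved under rank-one equivalence.

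For (iv), recall from the introduction that a quadratic form $\phi$ is quasiconvex if and only if it is rank-one convex, which holds if and only if $\phi(x\otimes y)\ge0$ for all $x,y$, i.e.\ $\phi(x,y)\ge0$ for all $x,y$ in the abbreviated notation. If $f(x,y)=g(Ax,By)$ for all $x,y$, then $\{(Ax,By):x,y\in\mathbb R^3\}=\mathbb R^3\times\mathbb R^3$, so $f(x,y)\ge0$ for all $x,y$ if and only if $g(x',y')\ge0$ for all $x',y'$; hence $f$ is quasiconvex if and only if $g$ is. The same bijection remark shows that the linear change of variables $\xi\mapsto A\xi B^T$ carries quadratic null-Lagrangians (the quadratic forms vanishing on all rank-one matrices) to quadratic null-Lagrangians; equivalently, at the level of the restricted functions, the statement that $h$ is a scalar multiple of $f$ modulo a null-Lagrangian means precisely that $h(x,y)=\lambda f(x,y)$ for all $x,y$ and some scalar $\lambda$.

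For (v), suppose $f(x,y)=g(Ax,By)$ for all $x,y$ and that $g$ is extremal in the sense of Definition~\ref{def:2}; by (ii) it is enough to prove $f$ is also extremal. Let $h$ be any quasiconvex quadratic form such that $f-h$ is quasiconvex, and set $\tilde h(x,y)=h(A^{-1}x,B^{-1}y)$. Then $\tilde h$ is rank-one equivalent to $h$, hence quasiconvex by (iv); and substituting $x\mapsto Ax$, $y\mapsto By$ in $f-h$ shows $(g-\tilde h)(Ax,By)=(f-h)(x,y)$, so $g-\tilde h$ is rank-one equivalent to $f-h$ and therefore quasiconvex by (iv). Since $g$ is extremal, $\tilde h$ must be a scalar multiple of $g$ modulo a null-Lagrangian, i.e.\ $\tilde h(x,y)=\lambda\, g(x,y)$ for all $x,y$ and some scalar $\lambda$; then $h(x,y)=\tilde h(Ax,By)=\lambda\, g(Ax,By)=\lambda\, f(x,y)$ for all $x,y$, which by the remark in the previous paragraph says $h$ is a scalar multiple of $f$ modulo a null-Lagrangian. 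Hence $f$ is extremal in the sense of Definition~\ref{def:2}, and the reverse implication follows by the symmetric argument using (ii).

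There is no genuine analytic obstacle anywhere in this lemma; the only point requiring a little care is the bookkeeping in (v) around the clause ``modulo Null-Lagrangians'', namely recording that $\xi\mapsto A\xi B^T$ permutes the quadratic null-Lagrangians so that the relation ``$h$ is a multiple of $f$ modulo a null-Lagrangian'' is itself invariant under rank-one equivalence. Once that is in hand, (v) is just a transport of the test in Definition~\ref{def:2} across the equivalence.
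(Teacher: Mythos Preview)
Your proof is correct and follows essentially the same approach as the paper's: (i)--(iii) are identical, (iv) uses the equivalence of quasiconvexity with $f(x,y)\ge 0$ together with the bijectivity of $(x,y)\mapsto(Ax,By)$, and (v) transports a decomposition of one form across the equivalence to the other. The only difference is cosmetic: the paper argues (v) by contrapositive (if $f$ is not extremal, write $f=f_1+f_2$ and push this to $g=g_1+g_2$), whereas you argue directly and are somewhat more explicit about why the clause ``modulo Null-Lagrangians'' survives the change of variables.
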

\begin{proof}
The proof of all five properties is trivial and follows directly from the definitions of quasiconvexity and rank-one equivalence.
For $(i)$ one takes $A$ and $B$ to be the identical transformations. For $(ii)$ if $f(x,y)=g(Ax,By)$ then $g(x,y)=f(A^{-1}x,B^{-1}y).$
For $(iii)$ if $f(x,y)=g(Ax,By)$ and $g(x,y)=h(Cx,Dy),$ then $f(x,y)=h(CAx,DBy).$
 If $f$ is not quasiconvex, then $f(x,y)<0$ for some $x,y\in\mathbb R^3,$ thus $g(Ax,By)=f(x,y)<0,$ which implies that $g$ is not quasiconvex either,
thus $(iv)$ follows. If now $f$ is not an extremal in the sense of Definition~\ref{def:2}, then $f=f_1+f_2$ for some quasiconvex $f_1$ and $f_2,$ then we have
$g(x,y)=f(A^{-1}x,B^{-1}y)=f_1(A^{-1}x,B^{-1}y)+f_2(A^{-1}x,B^{-1}y)=g_1(x,y)+g_2(x,y)$ and it is evident that if $f_1$ and $f_2$ are linearly independent
then so are $g_1$ and $g_2.$

\end{proof}

Using the notion of \textbf{rank-one equivalence} and Theorem~\ref{th:orthotr.extremal} we can generate other quasiconvex quadratic forms that are extremal in the sense of Definition~\ref{def:2}. In particular
Corollary~\ref{co:orthotr.extremal} follows directly from Theorem~\ref{th:orthotr.extremal} and the following Lemma:
\begin{Lemma}
\label{lem:alpha.beta.gamma=1}
The quadratic forms $f(\xi)=(\xi_{11}^2+\xi_{22}^2+\xi_{33}^2-2\xi_{11}\xi_{22}-2\xi_{22}\xi_{33}-2\xi_{33}\xi_{11})+\alpha \xi_{12}^2+\beta \xi_{23}^2+\gamma \xi_{31}^2$ and
$g(\xi)=(\xi_{11}^2+\xi_{22}^2+\xi_{33}^2+\alpha' \xi_{12}^2-2\xi_{11}\xi_{22}-2\xi_{22}\xi_{33}-2\xi_{33}\xi_{11})+\beta' \xi_{23}^2+\gamma' \xi_{31}^2$ are equivalent if $\alpha,\beta,\gamma,\alpha',\beta',\gamma'>0$ and
$\alpha\beta\gamma=\alpha'\beta'\gamma'.$
\end{Lemma}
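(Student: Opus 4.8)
The plan is to exhibit explicit nonsingular linear maps $A$ and $B$ realizing the equivalence; they can be taken diagonal. Passing to the $(x,y)$-description via $\xi_{ij}=x_iy_j$, the form $f$ reads
\[
f(x,y)=\sum_{i=1}^{3}x_i^2y_i^2-2\sum_{i=1}^{3}x_ix_{i+1}y_iy_{i+1}+\alpha\,x_1^2y_2^2+\beta\,x_2^2y_3^2+\gamma\,x_3^2y_1^2
\]
(indices read modulo $3$), and $g$ is identical except that $(\alpha,\beta,\gamma)$ is replaced by $(\alpha',\beta',\gamma')$; in particular the diagonal block $\sum_i x_i^2y_i^2-2\sum_i x_ix_{i+1}y_iy_{i+1}$ is the same in both forms.

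First I would look for $A=\mathrm{diag}(a_1,a_2,a_3)$ and $B=\mathrm{diag}(b_1,b_2,b_3)$ with all $a_i,b_i>0$, and impose the normalization $a_ib_i=1$. Then substituting $x\mapsto Ax$, $y\mapsto By$ into $g$ multiplies the monomial $x_i^2y_i^2$ by $a_i^2b_i^2=1$ and the monomial $x_ix_{i+1}y_iy_{i+1}$ by $(a_ib_i)(a_{i+1}b_{i+1})=1$, so the common diagonal block of $g$ is left exactly invariant, while $x_1^2y_2^2$, $x_2^2y_3^2$, $x_3^2y_1^2$ pick up the factors $a_1^2/a_2^2$, $a_2^2/a_3^2$, $a_3^2/a_1^2$ respectively. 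Consequently $f(x,y)=g(Ax,By)$ holds for all $x,y$ precisely when
\[
\alpha'\,\frac{a_1^2}{a_2^2}=\alpha,\qquad \beta'\,\frac{a_2^2}{a_3^2}=\beta,\qquad \gamma'\,\frac{a_3^2}{a_1^2}=\gamma.
\]

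Writing $t_i=a_i^2>0$, the first two of these can always be solved for positive $t_i$ --- for instance $t_3=1$, $t_2=\beta/\beta'$, $t_1=\alpha\beta/(\alpha'\beta')$, all positive by the hypothesis $\alpha,\beta,\alpha',\beta'>0$ --- and multiplying all three equations together gives the necessary condition $\alpha'\beta'\gamma'=\alpha\beta\gamma$. So under the standing hypothesis $\alpha\beta\gamma=\alpha'\beta'\gamma'$ the third equation holds automatically for the above choice of $t_i$; setting $a_i=\sqrt{t_i}$ and $b_i=1/a_i$ then produces nonsingular diagonal $A,B$ with $f(x,y)=g(Ax,By)$, which is exactly the assertion that $f$ and $g$ are rank-one equivalent. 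There is no real obstacle here: the only idea required is to restrict to diagonal maps normalized by $a_ib_i=1$, which decouples the problem by freezing the common quadratic block while leaving the three off-diagonal coefficients governed by just two independent ratios --- the compatibility condition forced on the third ratio is exactly $\alpha\beta\gamma=\alpha'\beta'\gamma'$, and that is the one place the hypothesis enters.
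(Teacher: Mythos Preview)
Your proof is correct and follows essentially the same approach as the paper: both use diagonal transformations $A=\mathrm{diag}(\lambda_i)$, $B=\mathrm{diag}(1/\lambda_i)$ (equivalently, your normalization $a_ib_i=1$), which leave the principal part invariant and reduce the problem to matching the three off-diagonal coefficients via two free ratios, with $\alpha\beta\gamma=\alpha'\beta'\gamma'$ as the compatibility condition. Your write-up is in fact more explicit about solving for the $\lambda_i$ and verifying that the third equation follows automatically.
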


\begin{proof}
We will call the expression in the parentheses the principal part of the quadratic from $f.$  For any fixed $\lambda_i\neq 0,$ $i=1,2,3$ it is clear that the change of variables $x_i=\lambda_ix_i'$ and $y_i=\frac{y_i'}{\lambda_i}$ does not change the principal part of $f,$ and the sum $\alpha \xi_{12}^2+\beta \xi_{23}^2+\gamma \xi_{31}^2$ maps to $\frac{\alpha\lambda_1^2}{\lambda_2^2}x_{1}'^2y_2'^2+\frac{\beta\lambda_2^2}{\lambda_3^2}x_{2}'^2y_3'^2+\frac{\gamma\lambda_3^2}{\lambda_1^2}x_{3}'^2y_1'^2.$
Taking into account the condition $\alpha\beta\gamma=\alpha'\beta'\gamma'$ we can choose now the parameters $\lambda_1,\lambda_2$ and $\lambda_3$ such that
$f$ maps exactly to $g$ under the above variable transform.
\end{proof}

\textbf{Proof of Theorem~\ref{th:orthotr.extremal}}.
For $(i)$ we should prove that the $y$-matrix of $Q$ is positive semi-definite for all $y\in\mathbb R^3.$ By direct calculation one sees that

$$T(y)=
\begin{bmatrix}
y_1^2+y_2^2 & -y_1y_2 & -y_1y_3\\
-y_1y_2 & y_2^2+y_3^2 & -y_2y_3\\
-y_1y_3 & -y_2y_3 & y_3^2+y_1^2
\end{bmatrix}.
$$
It is easy to see that the principal minors of $T(y)$ are nonnegative:
\begin{align*}
M_{11}&=\mathrm{det}
\begin{bmatrix}
y_2^2+y_3^2 & -y_2y_3\\
-y_2y_3 & y_3^2+y_1^2
\end{bmatrix}=y_1^2y_2^2+y_1^2y_3^2+y_3^4\geq 0,\\
M_{22}&=\mathrm{det}
\begin{bmatrix}
y_1^2+y_2^2 & -y_1y_3\\
-y_1y_3 & y_3^2+y_1^2
\end{bmatrix}=y_1^2y_2^2+y_2^2y_3^2+y_1^4\geq 0,\\
M_{33}&=\mathrm{det}
\begin{bmatrix}
y_1^2+y_2^2 & -y_1y_2\\
-y_1y_2 & y_2^2+y_3^2
\end{bmatrix}=y_1^2y_3^2+y_2^2y_3^2+y_2^4\geq 0,
\end{align*}

and
$$
\det(T(y))=y_1^4y_2^2+y_2^4y_3^2+y_3^4y_1^2-3y_1^2y_2^2y_3^2\geq 0,
$$
by the Cauchy-Schwartz inequality for geometric and arithmetic means. This proves $(i).$.

The proof of $(ii)$, that $Q$ is not polyconvex is especially easy. If we assume in contradiction that $Q$ is polyconvex, we must have
\begin{equation}
\label{polyconvexity.inequality}
Q(\eta)\geq \sum_{i=1}^9\alpha_iM_i(\eta)\quad\text{for all}\quad\eta\in\mathbb R^{3\times3},
\end{equation}
where $M_i(\eta)$ is the $i$-th $2\times2$ minor of $\eta,$ see [\ref{bib:Dac}, page 192, Lemma 5.72]. Notice that the right hand side of (\ref{polyconvexity.inequality}) necessarily contains
terms that involve one of the entries $\eta_{13},\eta_{21}$ or $\eta_{31}$ unless the coefficients $\alpha_i$ are all zero. Furthermore, since the
right hand side is linear in each of these variables, whereas the left hand side does not involve them, inequality (\ref{polyconvexity.inequality})
leads to a contradiction unless all the $\alpha_i$ are zero. But we have $Q(I)=-3$ thus $Q$ is not polyconvex.

For (iii) since $Q$ is not polyconvex,
it suffices to show it is extremal in the sense of Definition~\ref{def:2}. Let us now assume that $Q$ is not extremal in this sense, i.e. it can be written as $Q_1+Q_2$ where $Q_1$ and $Q_2$ are quasiconvex quadratic
forms modulo null-Lagrangians and are linearly independent. We have that
\begin{equation}
\label{Q_1<Q}
0\leq Q_1(x\otimes y)\leq Q(x\otimes y) \quad\text{for all}\quad x,y\in\mathbb R^3.
\end{equation}
Let us prove that $\ref{Q_1<Q}$ implies then that $Q_1=\lambda Q$ for some $0\leq\lambda \leq 1.$
The proof splits into several steps.\\
\textbf{Step 1.} First we show that $Q_1(x\otimes y)$ may involve only the products $x_ix_jy_ky_l$ that can be written as products of two of the variables
$\xi_{11},\xi_{22},\xi_{33},\xi_{12},\xi_{23},\xi_{31}.$\\
\textbf{Proof of Step 1.} Let us prove that $Q_1$ cannot involve for instance the product $x_1^2y_3^2.$
Assume by contradiction that $Q_1$ involves $x_1^2y_3^2$ with a coefficient $\alpha\neq 0.$ Taking $x_2=x_3=y_1=y_2=0$ and $x_1=y_3=1$ in (\ref{Q_1<Q}) we get

$$
0\leq Q_1((1,0,0)\otimes (0,0,1))=\alpha\leq Q((1,0,0)\otimes (0,0,1))=0,
$$
thus $\alpha=0,$ which is a contradiction. Assume now by contradiction that $Q_1$ involves $x_1^2y_1y_3$ with a coefficient $\alpha\neq 0$ as a summand. Denoting the coefficient of $x_1^2y_1^2$ in $Q_1$ by $\beta$ and taking $x_2=x_3=y_2=0,$ $x_1=y_1=1$ in (\ref{Q_1<Q}) we get
$$
0\leq Q_1((1,0,0)\otimes (1,0,y_3))=\alpha y_3+\beta \leq Q((1,0,0)\otimes (1,0,y_3))=1,
$$
which cannot be satisfied for all $y_3,$ unless $\alpha=0,$ which is again a contradiction. Similarly we can prove for all other products $x_ix_jy_ky_l$ that are not a product of the variables $\xi_{ij}$ involved in $Q.$\\
Due to the fact proven in step 1 we can write for $Q_1,$
\begin{equation}
\label{Q_1}
Q_1(\xi)=\xi A\xi^T,
\end{equation}
where $A$ is a $6\times 6$ symmetric matrix and $\xi=(\xi_{11},\xi_{22},\xi_{33},\xi_{12},\xi_{23},\xi_{31}).$\\
\textbf{Step 2.} In the second step we show that
$$
A_1=(a_{ij})_{i,j=1}^3=
\begin{bmatrix}
\alpha & -\alpha & -\alpha\\
-\alpha & \alpha & -\alpha\\
-\alpha & -\alpha & \alpha
\end{bmatrix},
$$
for some $\alpha\in[0,1].$\\
\textbf{Proof of Step 2.} It is clear that the diagonal entries of both $Q_1$ and $Q-Q_1$ are nonnegative, i.e.,

\begin{equation}
\label{diagonal entries}
0\leq a_{ii}\leq 1,\qquad\text{for all}\qquad i=1,\dots,6.
\end{equation}
Due to (\ref{Q_1<Q}) we have for any $t,s\neq 0$ that
$$
0\leq Q_1((1/t,t^2s^2,0)\otimes (t^2,1/(ts),0))\leq Q((1/t,t^2s^2,0)\otimes (t^2,1/(ts),0)),
$$
thus
$$0\leq a_{11}t^2+2a_{12}t^2s+a_{22}t^2s^2+\frac{2a_{14}}{ts}+\frac{2a_{24}}{t}+\frac{2a_{44}}{t^4s^2}\leq t^2-2t^2s+t^2s^2+\frac{1}{t^4s^2}.$$

From the first inequality we obtain
$$0\leq a_{11}+2a_{12}s+a_{22}s^2+\frac{2a_{14}}{t^3s}+\frac{2a_{24}}{t^3}+\frac{2a_{44}}{t^6s^2},$$
thus sending $t$ to infinity we get $0\leq a_{11}+2a_{12}s+a_{22}s^2$ for all $s\in\mathbb R,$
which gives
\begin{equation}
\label{a11a12a22}
a_{11}a_{22}\geq a_{12}^2.
\end{equation}
Similarly we get from the second inequality that
\begin{equation}
\label{1-a11a12a22}
(1-a_{11})(1-a_{22})\geq (1+a_{12})^2.
\end{equation}
Combining (\ref{a11a12a22}) and (\ref{1-a11a12a22}) we have
\begin{align*}
\sqrt{a_{11}a_{22}}+\sqrt{(1-a_{11})(1-a_{22})}&\geq |a_{12}|+|1+a_{12}|\geq 1,\\
(1-a_{11})(1-a_{22})&\geq (1-a_{11}a_{22})^2,\\
(\sqrt{a_{11}}-\sqrt{a_{22}})^2&\leq 0
\end{align*}
thus $a_{11}=a_{22}$ and one must have equality in (\ref{a11a12a22}) and in all subsequent inequalities from (\ref{a11a12a22}), which means $a_{12}=-a_{11}.$
Similarly we get $a_{11}=a_{33}$ and $a_{13}=a_{23}=-a_{11},$ which proves step 2.
We have that
$$f(\xi)=\alpha(\xi_{11}^2+\xi_{22}^2+\xi_{33}^2-2\xi_{11}\xi_{22}-2\xi_{22}\xi_{33}-2\xi_{33}\xi_{11})+a_{44}\xi_{12}^2+a_{55}\xi_{23}^2+a_{66}\xi_{31}^2$$
$$+2a_{14}\xi_{11}\xi_{12}+2a_{15}\xi_{11}\xi_{23}+2a_{16}\xi_{11}\xi_{31}+2a_{24}\xi_{22}\xi_{12}+2a_{25}\xi_{22}\xi_{23}+2a_{26}\xi_{22}\xi_{31}$$
$$+2a_{34}\xi_{33}\xi_{12}+2a_{35}\xi_{33}\xi_{23}+2a_{36}\xi_{33}\xi_{31}+2a_{45}\xi_{12}\xi_{23}+2a_{56}\xi_{31}\xi_{23}+2a_{46}\xi_{31}\xi_{12},$$
thus the $y$-matrix of $Q_1$ will be

\begin{align*}
&T_{Q_1}(y)=\\
&\begin{bmatrix}
\alpha y_1^2+a_{44}y_2^2+2a_{14}y_1y_2 & -\alpha y_1y_2+a_{15}y_1y_3+a_{24}y_2^2+a_{45}y_2y_3 & -\alpha y_1y_3+a_{46}y_1y_2+a_{16}y_1^2+a_{34}y_2y_3\\
\cdot & \alpha y_2^2+a_{55}y_3^2+2a_{25}y_2y_3 & -\alpha y_2y_3+a_{26}y_1y_2+a_{35}y_3^2+a_{56}y_1y_3\\
\cdot & \cdot & \alpha y_3^2+a_{66}y_1^2+2a_{36}y_3y_1
\end{bmatrix}.
\end{align*}
Consider now two cases:\\
\textbf{Case 1.} $\alpha=0.$\\
\textbf{Case 2.} $\alpha>0.$\\
\textbf{Case 1.} $\alpha=0.$
In this case it is clear that the $y$-matrix could be positive semi-definite for all $y\in\mathbb R^3$ only if it has the structure
$$
T_{Q_1}(y)=
\begin{bmatrix}
a_{44}y_2^2 & a_{45}y_2y_3 & a_{46}y_1y_2\\
\cdot & a_{55}y_3^2& a_{56}y_1y_3\\
\cdot & \cdot & a_{66}y_1^2
\end{bmatrix},
$$
i.e., the quadratic form $Q_1$ depends only on the variables $\xi_{12},\xi_{23},\xi_{31}$ and is rank-one convex. Since the variables $\xi_{12},\xi_{23},\xi_{31}$
 are totally independent, $Q_1$ must be convex. Every convex quadratic form is a sum of squares of linear quadratic forms, thus $Q-(a\xi_{12}+b\xi_{23}+c\xi_{31})^2$ must be rank-one convex for some $a,b,c\in\mathbb R.$ Let us prove that this then implies that $a=b=c=0.$
 We have that
$$0=Q((-1,0,0)\otimes (-1,1,1))\geq Q((-1,1,1)\otimes (-1,1,1))-(-a+b-c)^2=-(-a+b-c)^2,$$
thus $-a+b-c=0$. Similarly $a-b-c=0$ and $-a-b+c=0,$ which implies $a=b=c=0$ and thus $Q_1\equiv 0,$ which is a contradiction.\\
\textbf{Case 2.} $\alpha>0.$ From the positivity of the $M_{33}$ minor of $T_{Q_1}(y)$ we get
\begin{equation}
\label{M33}
(\alpha y_1^2+a_{44}y_2^2+2a_{14}y_1y_2)(\alpha y_2^2+a_{55}y_3^2+2a_{25}y_2y_3)\geq (-\alpha y_1y_2+a_{15}y_1y_3+a_{24}y_2^2+a_{45}y_2y_3)^2.
\end{equation}
Substituting $y_3=0$ in (\ref{M33}) and assuming that $y_2\neq 0,$ we get,
$$(a_{44}\alpha-a_{24}^2)y_2^2\geq -2\alpha(a_{14}+a_{24})y_1y_2,$$
thus $a_{14}+a_{24}=0.$
On the other hand the coefficient of $y_1^2$ in the right hand side in (\ref{M33}) must not exceed the coefficient of $y_1^2$ in the left hand side of (\ref{M33}), which gives
$$(\alpha a_{15}-a_{55}^2)y_3^2\geq -2\alpha(a_{15}+a_{25})y_2y_3,$$
thus $a_{15}+a_{25}=0.$ Similarly, doing the same analysis for minors $M_{22}$ and $M_{11}$ we obtain,
\begin{equation*}
\label{M22M11}
\begin{cases}
a_{14}+a_{24}=0 \\
a_{15}+a_{25}=0,
\end{cases}\qquad
\begin{cases}
a_{14}+a_{34}=0 \\
a_{16}+a_{36}=0,
\end{cases}
\qquad
\begin{cases}
a_{25}+a_{35}=0 \\
a_{26}+a_{36}=0,
\end{cases}
\end{equation*}

thus the $y$-matrix of $Q_1$ has the structure

\begin{align*}
&T_{Q_1}(y)=\\
&\begin{bmatrix}
\alpha y_1^2+a y_2^2-2\beta y_1y_2 & -\alpha y_1y_2+\gamma y_1y_3+\beta y_2^2+a_{45}y_2y_3 & -\alpha y_1y_3+\delta y_1^2+\beta y_2y_3+a_{46}y_1y_2\\
\cdot & \alpha y_2^2+by_3^2-2\gamma y_2y_3 & -\alpha y_2y_3+\delta y_1y_2+\gamma y_3^2+a_{56}y_1y_3\\
\cdot & \cdot & \alpha y_3^2+cy_1^2-2\delta y_3y_1
\end{bmatrix}.
\end{align*}
Next we consider the determinant of $T_{Q_1}(y),$ that must be non-negative for all $y\in\mathbb R^3.$ It is clear that
$\det(T_{Q_1}(y))$ is a homogeneous polynomial of degree 6 in the variables $y_1,y_2$ and $y_3.$ By direct calculation (e.g. by Maple)
one sees that the highest power of each of the variables in $\det(T_{Q_1}(y))$ is 4. Moreover, the coefficients of
$y_1^4y_2^2,$ $y_2^4y_3^2$ and $y_3^4y_1^2$ in $\det(T_{Q_1}(y))$ are $-4\alpha\delta^2,$ $-4\alpha\beta^2$ and $-4\alpha\gamma^2$ respectively, thus
from the positivity of $\det(T_{Q_1}(y))$ we get,
$$-4\alpha\delta^2\geq 0,\qquad -4\alpha\beta^2\geq 0,\qquad -4\alpha\gamma^2\geq 0,$$
which implies $\beta=\gamma=\delta=0,$ thus $T_{Q_1}(y)$ has the structure
$$
T_{Q_1}(y)=
\begin{bmatrix}
\alpha y_1^2+a y_2^2& -\alpha y_1y_2+a_{45}y_2y_3 & -\alpha y_1y_3+a_{46}y_1y_2\\
\cdot & \alpha y_2^2+by_3^2 & -\alpha y_2y_3+a_{56}y_1y_3\\
\cdot & \cdot & \alpha y_3^2+cy_1^2
\end{bmatrix},
$$
therefore for $Q_1$ we get,
$$Q(\xi)=\alpha(\xi_{11}^2+\xi_{22}^2+\xi_{33}^2-2\xi_{11}\xi_{22}-2\xi_{22}\xi_{33}-2\xi_{33}\xi_{11})+a\xi_{12}^2+b\xi_{23}^2+c\xi_{31}^2
+a_{45}\xi_{12}\xi_{23}+a_{46}\xi_{12}\xi_{31}+a_{56}\xi_{23}\xi_{31}.$$
If $Q(x\otimes y)=0$ for some $x,y\in\mathbb R^3$ then $0\leq Q_1(x\otimes y)\leq Q(x\otimes y)=0,$ thus $Q_1(x\otimes y)=0.$ It is clear that
$Q((1,1,1)\otimes (1,1,1))=Q((-1,1,1)\otimes (-1,1,1)=Q((1,-1,1)\otimes (1,-1,1))=Q((1,1,-1)\otimes (1,1,-1))=0,$ thus we obtain the system
$$
\begin{cases}
-3\alpha+a+b+c+a_{45}+a_{46}+a_{56}=0\\
-3\alpha+a+b+c-a_{45}+a_{46}-a_{56}=0\\
-3\alpha+a+b+c+a_{45}-a_{46}-a_{56}=0\\
-3\alpha+a+b+c-a_{45}-a_{46}+a_{56}=0,\\
\end{cases}
$$
from which we get
\begin{equation}
\label{abc=3alpha}
a_{45}=a_{46}=a_{56}=0,\qquad a+b+c=3\alpha.
\end{equation}
We have again by direct calculation,
$$\det(T_{Q_1}(y))=(abc-4\alpha^3)y_1^2y_2^2y_3^2+\alpha(aby_2^2y_3^4+bcy_3^2y_1^4+cay_1^2y_2^4).$$
If one of the coefficients $a,b$ and $c$, say $a$ is zero, then $\det(T_{Q_1}(y))=-4\alpha y_1^2y_2^2y_3^2+\alpha bc y_3^2y_1^4$
will take negative values as $y_2\to\infty,$ thus $a,b,c>0.$ We can choose the variables
$y_1>0,$ $y_2>0$ and $y_3>0$ such that $aby_2^2y_3^4=bcy_3^2y_1^4=cay_1^2y_2^4=(a^2b^2c^2)^{1/3}y_1^2y_2^2y_3^2,$
thus we obtain
 $$\det(T_{Q_1}(y))=(abc+3\alpha(a^2b^2c^2)^{1/3}-4\alpha^3)y_1^2y_2^2y_3^2\geq 0,$$
 which gives
 \begin{equation}
 \label{abc<alpha}
 abc\geq\alpha^3.
 \end{equation}
But on the other hand we have by the Cauchy-Schwartz inequality and by (\ref{abc=3alpha}),
\begin{equation}
 \label{abc>alpha}
 3\alpha =a+b+c\geq 3(abc)^{1/3},
 \end{equation}
which will lead to a contradiction with (\ref{abc<alpha}) unless the Cauchy-Schwartz inequality turns to equality in (\ref{abc>alpha}), i.e.,
$a=b=c=\alpha.$ The last equality is nothing else but $Q_1=\alpha Q,$ thus $Q$ is indeed an extremal.\\

\subsection{The special fields of $Q$: Boundary conditions and sharp inequalities}
\label{sec:special.fields}

In this section, following Milton [\ref{bib:Mil2}], we find the special fields of the quadratic form $Q$ as well as
derive a formula for the appropriate boundary conditions on $u$, such that we obtain a sharp lower bound on integral of $Q(\nabla u)$ over a domain $\Omega$.   We will consider the quadratic form $Q$ that
appears in Theorem~\ref{sec:orthot.symmetry}.
In that case
\begin{equation}
Q(\xi)=(\xi_{11}^2+\xi_{22}^2+\xi_{33}^2-2\xi_{11}\xi_{22}-2\xi_{22}\xi_{33}-2\xi_{33}\xi_{11})+ \xi_{12}^2+\xi_{23}^2+\xi_{31}^2=\sum_{i,j=1}^3J_{ij}\xi_{ij},
\end{equation}
where the $J_{ij}$ are the elements of the matrix
\begin{equation} J=T\xi=\begin{bmatrix}
\xi_{11}-\xi_{22}-\xi_{33} & \xi_{12} & 0 \\
0 & \xi_{22}-\xi_{33}-\xi_{11} & \xi_{23} \\
\xi_{31} & 0 & \xi_{33}-\xi_{11}-\xi_{22}
\end{bmatrix},
\end{equation}
which defines the (symmetric) fourth order tensor $T$.

First we find the rank-one matrices $\xi\in\mathbb R^{3\times3}$ such that and $Q(\xi)=0.$
We have seen in section~\ref{sec:orthot.symmetry} that
$$\det(Q(y))=y_1^4y_2^2+y_2^4y_3^2+y_3^4y_1^2-3y_1^2y_2^2y_3^2=0$$
 if and only if $|y_1|=|y_2|=|y_3|,$ where $\xi=x\otimes y.$

If $y_1=y_2=y_3=z\neq 0,$ then we get
$$Q(x\otimes y)=z^2((x_1-x_2)^2+(x_2-x_3)^2+(x_3-x_1)^2)=0$$
 if and only if $x_1=x_2=x_3,$ i.e., $y=(z,z,z)$ and $x=(t,t,t).$ Assume now $y=(-z,z,z),$ where $z\neq0.$
We have that
$$Q(x\otimes y)=z^2((x_1+x_2)^2+(x_2-x_3)^2+(x_3+x_1)^2)=0$$
 if and only if $x_1=-t, x_2=x_3=t$ i.e., $y=(-z,z,z)$ and $x=(-t,t,t).$
  Therefore, by the cyclic symmetry of $Q$, all rank-one fields $\xi$ that satisfy $Q(\xi)=0$ must have one of the forms:

\begin{equation}
\label{special.rank.one.firlds}
\xi=(t,t,t)\otimes(z,z,z),\ \xi=(-t,t,t)\otimes(-z,z,z),\ \xi=(t,-t,t)\otimes(z,-z,z),\ \xi=(t,t,-t)\otimes(z,z,-z).
\end{equation}

Following the ideas in [\ref{bib:Mil2}], let us now show that
\begin{equation}
\label{quas.ineq.periodic}
\langle Q(E)\rangle\geq 0,
\end{equation}
for any field
$E\in\mathbb R^{3\times3}$ (not necessarily rank-one) that is the gradient of a periodic potential $u\colon\mathbb R^3\to\mathbb R^3$, and find the special fields of $Q,$ i.e. the ones that satisfy the equality
\begin{equation}
\label{special.fields}
\langle Q(\underbar{E})\rangle=0.
\end{equation}
Here the angular brackets mean the average over the unit cell of periodicity. Assume that $D=[-1,1]^3$ is the unit cell of
periodicity. If the cell is any rectangular parallelepiped $D$, we can achieve the situation $D=[-1,1]^3$ by change of variables shrinking or
stretching the cell, that evidently preserves the rank-one convexity of the quadratic form.
By the divergence theorem we have for any $D-$periodic potential $u\colon\mathbb R^3\to \mathbb R^3$ and the associated field $E=\nabla u,$ that
$$\langle E\rangle=\langle \nabla u\rangle=0.$$
As $Q$ is quadratic, using the idea of Murat and Tartar [\ref{bib:Mur.Tar},\ref{bib:Mor1},\ref{bib:Mor2}], (see also [\ref{bib:Mil2}]), we can write $\langle Q(E)\rangle$ in Fourier space using the Parseval's identity and we have

\begin{align*}
\langle Q(E)\rangle&=\langle Q(\nabla u)\rangle\\
&=\sum_{k=(k_1,k_2,k_3)\neq0}Q(\mathrm{Re}(\hat E(k)))+Q(\mathrm{Im}(\hat E(k)))\\
&=\sum_{k\neq0}Q(\mathrm{Re}(\hat u(k)\otimes k))+Q(\mathrm{Im}(\hat u(k)\otimes k))\\
&\geq0,
\end{align*}
due to the quasiconvexity of $Q.$ It is then clear by (\ref{special.rank.one.firlds}), that the equality holds if and only if
$\hat u(k)=0$ if $k$ does not have one of the forms $(l,l,l), (-l,l,l), (l,-l,l), (l,l,-l)$ and also
\begin{align*}
&\hat u_1(l,l,l)=\hat u_2(l,l,l)=\hat u_3(l,l,l),\\
-&\hat u_1(-l,l,l)=\hat u_2(-l,l,l)=\hat u_3(-l,l,l),\\
&\hat u_1(l,-l,l)=-\hat u_2(l,-l,l)=\hat u_3(l,-l,l),\\
&\hat u_1(l,l,-l)=\hat u_2(l,l,-l)=-\hat u_3(l,l,-l).
\end{align*}
So $\langle Q(\nabla \underbar{u})\rangle=0$ when $\underbar{u}$ takes the form
\begin{eqnarray}
\underbar{u}_1 & = &v_0(x_1+x_2+x_3)-v_1(-x_1+x_2+x_3)+v_2(x_1-x_2+x_3)+v_3(x_1+x_2-x_3),\nonumber \\
\underbar{u}_2 & = & v_0(x_1+x_2+x_3)+v_1(-x_1+x_2+x_3)-v_2(x_1-x_2+x_3)+v_3(x_1+x_2-x_3), \nonumber \\
\underbar{u}_3 & = & v_0(x_1+x_2+x_3)+v_1(-x_1+x_2+x_3)+v_2(x_1-x_2+x_3)-v_3(x_1+x_2-x_3),
\label{special.form}
\end{eqnarray}
where $v_1,v_2$ and $v_3$ are $2$-periodic $C^1$ functions defined in $\mathbb R$. For the special gradient field $\underbar{E}=\nabla \underbar{u}$ and associated special field $\underbar{J}=T\underbar{E}$ we get
\begin{eqnarray}
\label{special.field.gradient}
\underbar{E} & = & \begin{bmatrix}
v_0'+v_1'+v_2'+v_3' & v_0'-v_1'-v_2'+v_3' & v_0'-v_1'+v_2'-v_3' \\
v_0'-v_1'-v_2'+v_3' & v_0'+v_1'+v_2'+v_3' & v_0'+v_1'-v_2'-v_3' \\
v_0'-v_1'+v_2'-v_3' & v_0'+v_1'-v_2'-v_3' & v_0'+v_1'+v_2'+v_3'
\end{bmatrix}, \nonumber\\
\underbar{J} & = & \begin{bmatrix}
-v_0'-v_1'-v_2'-v_3' & v_0'-v_1'-v_2'+v_3' & 0 \\
0 & -v_0'-v_1'-v_2'-v_3' & v_0'+v_1'-v_2'-v_3' \\
v_0'-v_1'+v_2'-v_3' & 0 & -v_0'-v_1'-v_2'-v_3'
\end{bmatrix} ,
\end{eqnarray}
where $v_0', v_1',v_2'$ and $v_3'$ are the first derivatives of $v_0(t), v_1(t),v_2(t)$ and $v_3(t)$ evaluated at $t=x_1+x_2+x_3$, $t=-x_1+x_2+x_3$, $t=x_1-x_2+x_3$, and $t=x_1+x_2-x_3$ respectively.
Note that the rows of $\underbar{J}$ are divergence-free, as expected from the general theory in [\ref{bib:Mil2}].

Assume now $\Omega\in\mathbb R^3$ is a $C^1$ domain and $E(x)=\nabla u(x)\colon\Omega\to\mathbb R^{3\times3}$ satisfies the boundary
conditions $u(x)=\underbar{u}(x)$ on $\partial\Omega$ for some $\underbar{u}(x)$ of the form (\ref{special.form}) for some
continuous functions $v_i$ $i=0,1,2,3$ defined in a neighborhood of $\partial\Omega.$ Then one can extend the functions $v_i$ so that they are periodic with a unit cell of periodicity
$D$ containing $\Omega$ and define $\underbar{u}$ by (\ref{special.form}). We can extend $u(x)$ so that it is $D$-periodic and equals
$\underbar{u}$ in $D\setminus \Omega$.
We have on one hand by (\ref{quas.ineq.periodic}) that,

$$\int_{D\setminus\Omega}Q(\underbar{E}(x))\ud x+\int_{\Omega}Q(E(x))\ud x=\int_D Q(E(x))\ud x\geq 0,$$
and on the other hand since $\underbar{E}$ is a special field,
$$\int_D Q(\underbar{E}(x))\ud x=0,$$
thus we arrive at
\begin{equation}
\label{E.average>bar.E.average}
\int_{\Omega}Q(E(x))\ud x\geq\int_\Omega Q(\underbar{E}(x))\ud x.
\end{equation}
Since the rows of $\underbar{J}$ are divergence-free, we can in fact evaluate, in terms of the boundary conditions,
\begin{align*}
\int_\Omega Q(\underbar{E}(x))\ud x=\int_\Omega\sum_{i,j=1}^3\underbar{J}_{ij}\underbar{E}_{ij}\ud x=
\int_{\partial\Omega}\underbar{u}\cdot\underbar{J}n \ud S,
\end{align*}
where $n$ is the outward unit normal to $\partial\Omega$. So we obtain the inequality
$$\int_{\Omega}Q(E(x))\ud x\geq\int_{\partial\Omega}\underbar{u}\cdot\underbar{J}n \ud S, $$
which is sharp, being attained when $E(x)=\underbar{E}(x)$ inside $\Omega$.

\section*{Acknowledgements}
Bob Kohn is thanked for his interest. The authors are grateful to the National Science Foundation for support through grant DMS-1211359.

\end{document}